\title{Critical graphs without triangles: an optimum density construction}
\author{Wesley Pegden\footnote{
Department of Mathematics, 
Courant Institute, New York University,
251 Mercer St.,
New York, NY 10012. 
Email: pegden@math.nyu.edu}
}
\date{September 29, 2012}
\newcommand{\sbs}{\subset}
\newcommand{\abs}[1]{\lvert #1 \rvert}
\newcommand{\cel}[1]{\left\lceil #1 \right\rceil}
\newcommand{\fcl}[1]{\left\lfloor #1 \right\rceil}
\newcommand{\flr}[1]{\left\lfloor #1 \right\rfloor}
\newcommand{\st}{\,\vrule\,}
\newcommand{\Z}{\mathbb{Z}}
\newtheorem{theorem}{Theorem}[section]
\newtheorem{question}[theorem]{Question}
\newtheorem{lemma}[theorem]{Lemma}
\newtheorem{cor}[theorem]{Corollary}
\newtheorem{observ}[theorem]{Observation}
\theoremstyle{definition}
\newtheorem{definition}[theorem]{Definition}
\theoremstyle{remark}
\newcommand{\defn}{\stackrel{\rm{defn}}{=}}
\newcommand{\ep}{\varepsilon}
\newcommand{\vx}[2]{\cnode*(#1){1.2pt}{#2}}
\newcommand{\VX}[2]{\cnode*(#1){2.5pt}{#2}}
\newcommand{\edg}[2]{\ncline{#1}{#2}}
\newcommand{\cedg}[2]{\ncarc[arcangleA=15,arcangleB=15]{#1}{#2}}
\begin{document}

\maketitle

\begin{abstract}
We construct dense, triangle-free, chromatic-critical graphs of chromatic number $k$ for all $k\geq 4$.  For $k\geq 6$ our constructions have $>(\frac 1 4 -\ep)n^2$ edges, which is asymptotically best possible by Tur\'an's theorem.  We also demonstrate (nonconstructively) the existence of dense $k$-critical graphs avoiding all odd cycles of length $\leq \ell$ for any $\ell$ and any $k\geq 4$, again with a best possible density of $> (\frac 1 4 -\ep)n^2$ edges for $k\geq 6$.  The families of graphs without triangles or of given odd-girth are thus rare examples where we know the correct maximal density of $k$-critical members ($k\geq 6$).
\end{abstract}

\section{Introduction}
\label{intro}

The family of graphs without triangles exhibits many similarities with that of the bipartite graphs---for example, in the number of such graphs on a given number of vertices\cite{ekr}---suggesting that in some senses, the most significant barrier to being bipartite is the presence of triangles.  This naturally motivates the study of the chromatic number of triangle-free graphs, which, though not apparent at first sight, can be arbitrarily large, due to the constructions (for example) of Zykov and Mycielski\cite{zykov},\cite{mycielski}.  Both of these constructions have in common that they give graphs which are quite sparse.  Mycielski's is the denser of the two constructions, with $O(n^{\log_2 3})$ edges, so well below quadratic edge density.  Coupled with the observation that a triangle-free graph with the maximum number of edges is a balanced complete bipartite graph (and so 2-colorable), this suggests the chromatic number of triangle-free graphs with lots of edges as a natural area of study.

If we are not careful we encounter trivialities.  Taking the disjoint union of a $k$-chromatic triangle-free graph with a large complete bipartite graph, we get a rather disappointing example of a $k$-chromatic triangle-free graph which is dense (in this case, it can have $\frac {1-\ep}{4}n^2$ edges).  Since the added complete bipartite graph plays no role in either the chromatic number of the graph or the difficulty of avoiding triangles in the `important' part of the graph, this is hardly satisfactory.

The question along these lines by Erd\H{o}s and Simonovits was: how large can the minimum degree be in a triangle-free graph of large chromatic number?  Hajnal's construction given in \cite{mindeg} proceeds by pasting new vertices and edges onto a suitable Kneser graph (triangle-free and already with arbitrarily large chromatic number) and shows that one can have triangle-free graphs with minimum degree $\delta \geq (1-\ep)\frac n 3$ and arbitrarily large chromatic number.  They originally conjectured that $\delta >\frac n 3$ would force 3-colorability.  H\"aggkvist \cite{h} found a 4-chromatic counterexample.  This problem has recently been resolved: after Thomassen proved that triangle-free graphs with minimum degree $\delta>\frac {1+\ep} 3 n$ have bounded chromatic number\cite{t}, Brandt and Thomass\'e have shown that triangle-free graphs with $\delta>\frac 1 3 n$ are indeed 4-colorable\cite{bt}.  For 3-colorability, the threshold is $\frac{10}{29}$\cite{jin}. Remaining questions appear in \cite{bt}.  For more about triangle-free graphs with large minimum degree, we refer to \cite{b}.

\vspace{1em}
\noindent 
The problem we study is of a slightly different type.  We return to the basic notion of density (requiring only a quadratic number of edges, with no restrictions on the minimum degree), and use the requirement of \emph{criticality} to avoid trivialities like adding a disjoint complete bipartite graph.  
\begin{definition}
A graph is $k$-critical if it has chromatic number $k$, and removing any edge allows it to be properly $(k-1)$-colored.  
\end{definition}
\noindent Thus by requiring this of our triangle-free graphs, there will be no edges contributing to the density of the graph without playing a role in its chromatic number.  Many of the classical constructions of $k$-chromatic triangle-free graphs in fact give critical graphs: the Mycielski construction gives $k$-critical graphs, and the Zykov construction can be easily modified to do the same.  Lower bounds on the sparsity of triangle-free critical graphs have been given by \cite{ks}.

\begin{figure}[t]
\begin{center}
\psset{xunit=.9cm,yunit=.7cm}
\begin{pspicture}(0,1.5)(6,5)

\psset{linewidth=.5pt}

\multido{\n=1+1}{5}{%
  \VX{.5,\n}{Vfa\n}
  \VX{5.5,\n}{Vfd\n}
  \VX{1.5,\n}{Vfb\n}
  \VX{4.5,\n}{Vfc\n}
  \edg{Vfa\n}{Vfb\n}
  \edg{Vfc\n}{Vfd\n}
}

\multido{\na=1+1,\nb=2+1}{4}{%
  \edg{Vfa\na}{Vfa\nb}
  \edg{Vfd\na}{Vfd\nb}
}

\multido{\ni=1+1}{5}{%
  \multido{\nj=1+1}{5}{%
    \edg{Vfb\ni}{Vfc\nj}
  }
}

\ncarc[arcangleA=30,arcangleB=30]{Vfa1}{Vfa5}
\ncarc[arcangleA=30,arcangleB=30]{Vfd5}{Vfd1}

\end{pspicture}
\end{center}
\caption{Toft's graph is a dense, triangle-free, 4-critical graph.  It is constructed by joining 2 sets of size $2\ell+1$ in a complete bipartite graph, and matching each to an (odd) $(2\ell+1)$-cycle.  \label{t} It has $\frac 1 {16}n^2+n$ edges.}
\end{figure}
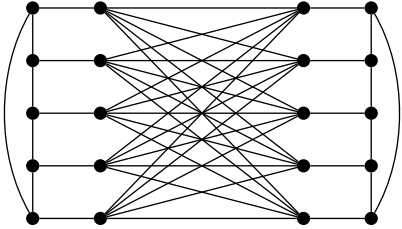

\begin{figure}[t]
\begin{center}
\begin{pspicture}(1,1.3)(5,3)

\psset{linewidth=.5pt}

\VX{1,1}{g1}
\rput(.9,1.2){$v$}
\VX{2,1}{g2}
\VX{3,1}{g3}
\VX{4,1}{g4}
\VX{5,1}{g5}

\edg{g1}{g2}
\edg{g2}{g3}
\edg{g3}{g4}
\edg{g4}{g5}
\cedg{g5}{g1}

\VX{1,2}{m1}
\rput(.9,2.2){$\bar v$}
\VX{2,2}{m2}
\VX{3,2}{m3}
\VX{4,2}{m4}
\VX{5,2}{m5}

\edg{m1}{g2} \edg{m1}{g5}
\edg{m2}{g1} \edg{m2}{g3}
\edg{m3}{g2} \edg{m3}{g4}
\edg{m4}{g3} \edg{m4}{g5}
\edg{m5}{g4} \edg{m5}{g1}

\VX{3,3}{x}
\rput(3.3,3.1){$x$}

\edg{m1}{x}
\edg{m2}{x}
\edg{m3}{x}
\edg{m4}{x}
\edg{m5}{x}

 %




\end{pspicture}
\end{center}
\caption{Mycielski's construction of $k$-critical triangle-free graphs.  Shown is $\mu(C_5)$.\label{mycielski}}
\end{figure}
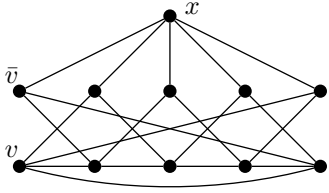

The main question we answer in the affirmative here (Theorem \ref{dtf}) is: \textbf{does there exist a family of triangle-free critical graphs of arbitrarily large chromatic number, and each with $>cn^2$ edges for some fixed $c>0$?}  

\vspace{1em}

\begin{definition}
The \emph{density constants} $c_{\ell,k}$ are each defined as the supremum of constants $c$ such that there are families of (infinitely many) $k$-critical graphs of odd-girth $> \ell$ and with $>cn^2$ edges.
\end{definition}
\noindent (The \emph{odd-girth} is the length of a shortest odd-cycle.) We abbreviate $c_{3,k}$ as $c_k$.  Thus the positive answer to the above question will follow from our result that in fact $c_k$ is bounded below by a constant for all $k\geq 4$ (Theorem \ref{dtf}).

An easier question is to ask just for dense families of (infinitely many) triangle-free $k$-critical graphs for \emph{each} $k\geq 4$---so, just to show that all of the constants $c_k$, $k\geq 4$ are positive.  For $k=4$, this is demonstrated by Toft's graph (Figure \ref{t}) \cite{toft}, which has $>\frac {n^2} {16}$ edges (the constant $\frac{1}{16}$ is the best known for $k=4$ even without the condition `triangle-free').  Using Toft's graph, one can repeatedly apply Mycielski's operation \cite{mycielski} to get families of $k$-critical triangle-free graphs with $\geq c_kn^2$ edges for each $k\geq 4$.  Here $c_k=\frac {1}{16}(\frac{3}{4})^{k-4}$, which tends to 0 with $k$---thus this method only answers the easier question.  (Mycielski's operation $\mu(G)$ on $G$ is: create a new vertex $\bar v$ for each $v\in G$, join it to the neighbors of $v$ in $G$, and join a new vertex $x$ to all of the new vertices $\bar v$.  This operation is illustrated in Figure \ref{mycielski} where $G$ is the 5-cycle.)

The weaker question is also answered directly for the particular case $k=5$ by the following construction of Gy\'arf\'as\cite{g}.  Four copies $T_1,\dots,T_4$ of the Toft graph are matched each with corresponding independent sets $A_1,\dots,A_4$.  The pairs $A_i,A_{i+1}$, $1\leq i\leq 3$, are each joined in complete bipartite graphs, and the sets $A_1,A_4$ are both joined completely to another vertex $x$.  This is illustrated in Figure \ref{gyarfas}.  This gives a constant $c_5\geq \frac{13}{256}$.  Our construction will show that $c_5\geq \frac 4 {31}$, which is asymptotically the same density shown by Toft \cite{toft} without the triangle-free requirement, and still the best known for that problem.  Our general result for the density of $k$-critical triangle-free graphs is the following, which we prove constructively.

\begin{figure}[t]
\begin{center}
\psset{xunit=.36cm,yunit=.22cm}
\begin{pspicture}(0,5)(29,19)
%
%
\psset{linecolor=black,linewidth=2pt}
\psline(5.5,10.5)(7.5,12.5)
\psline(7.5,10.5)(5.5,12.5)
\psline(5.5,9.5)(7.5,11.5)
\psline(7.5,9.5)(5.5,11.5)
\psline(13.5,10.5)(15.5,12.5)
\psline(15.5,10.5)(13.5,12.5)
\psline(13.5,9.5)(15.5,11.5)
\psline(15.5,9.5)(13.5,11.5)
\psline(21.5,10.5)(23.5,12.5)
\psline(23.5,10.5)(21.5,12.5)
\psline(21.5,9.5)(23.5,11.5)
\psline(23.5,9.5)(21.5,11.5)

%
%

\Multido{\nheight=15+0, \nheightt=16+0, \nheighte=19+0, \nmheight=9+0, \nnn=1+8, \nax=1+8, \nbx=2+8, \ncx=3+8, \ndx=4+8}{4}{%
\psset{linecolor=black,linewidth=.3pt}
  \multido{\ngf=\nheight+1,\nggf=\nmheight+1,\nmoheight=\nheight+1}{5}{%
    \vx{\nax,\ngf}{V\nax B\ngf}
    \vx{\nbx,\ngf}{V\nbx B\ngf}
    \vx{\ncx,\ngf}{V\ncx B\ngf}
    \vx{\ndx,\ngf}{V\ndx B\ngf}
    \vx{\nax,\nggf}{V\nax B\nggf}
    \vx{\nbx,\nggf}{V\nbx B\nggf}
    \vx{\ncx,\nggf}{V\ncx B\nggf}
    \vx{\ndx,\nggf}{V\ndx B\nggf}
    \psset{linecolor=black,linewidth=.25pt,dash=3pt 1.5pt}
    \ncarc[arcangleA=60,arcangleB=60]{V\nax B\nggf}{V\nax B\nmoheight}
    \ncarc[arcangleA=25,arcangleB=25]{V\nbx B\nggf}{V\nbx B\nmoheight}
    \ncarc[arcangleA=25,arcangleB=25]{V\ncx B\nmoheight}{V\ncx B\nggf}
    \ncarc[arcangleA=60,arcangleB=60]{V\ndx B\nmoheight}{V\ndx B\nggf}
%
    \edg{V\nax B\ngf}{V\nbx B\ngf}
    \edg{V\ncx B\ngf}{V\ndx B\ngf}
  }
  \multido{\ngfa=\nheight+1,\ngfb=\nheightt+1}{4}{%
    \edg{V\nax B\ngfa}{V\nax B\ngfb}
    \edg{V\ndx B\ngfa}{V\ndx B\ngfb}
  }
  \multido{\ngfi=\nheight+1}{5}{%
    \multido{\ngfj=\nheight+1}{5}{%
      \edg{V\nbx B\ngfi}{V\ncx B\ngfj}
    }
  }
  \ncarc[arcangleA=30,arcangleB=30]{V\nax B\nheight}{V\nax B\nheighte}
  \ncarc[arcangleA=30,arcangleB=30]{V\ndx B\nheighte}{V\ndx B\nheight}
%
}

\psset{linecolor=black}
\vx{15,2}{X}
\psset{linecolor=black,linewidth=.005}
\multido{\nstartB=1+1}{4}{%
  \multido{\nhanyB=9+1}{5}{%
    \edg{V\nstartB B\nhanyB}{X}
  }
}
\multido{\nstartB=25+1}{4}{%
  \multido{\nhanyB=9+1}{5}{%
    \edg{V\nstartB B\nhanyB}{X}
  }
}

\end{pspicture}
\end{center}
\caption{Gy\'arf\'as's graph: a dense, triangle-free, 5-critical graph.  Each of the three double crosses stands for the edges of the complete bipartite graph joining the independent sets on either side. (For this particular $n=161$, each double cross represents 400 edges).   \label{gyarfas} Gy\'arf\'as's graph is dense, with $(\frac{13}{256}+o(1))n^2$ edges.}
\end{figure}
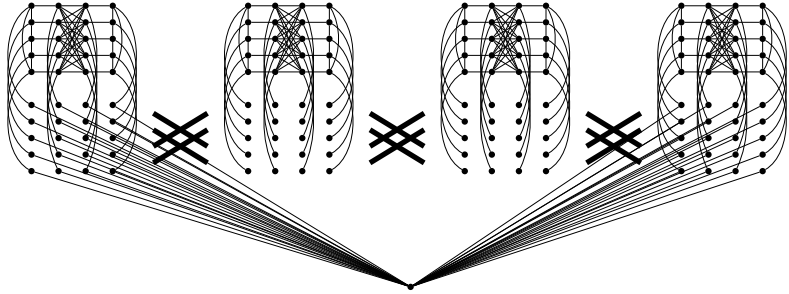

\begin{theorem}
There is a dense family of triangle-free critical graphs containing infinitely many $k$-chromatic graphs for each $k\geq 4$, obtained by recursive construction.  In particular, the density constants satisfy $c_4\geq \frac 1 {16}$, $c_5\geq \frac{4}{31}$, and $c_k=\frac 1 4$ for all $k\geq 6$.
\label{dtf}
\end{theorem}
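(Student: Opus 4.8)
The plan is to treat the three regimes separately, after noting the matching upper bound. By Tur\'an's theorem a triangle-free graph on $n$ vertices has fewer than $\tfrac14 n^2$ edges, so $c_k\le\tfrac14$ for all $k$; thus for $k\ge 6$ it suffices to produce, for each $\ep>0$, an infinite family of triangle-free $k$-critical graphs with more than $(\tfrac14-\ep)n^2$ edges. The bound $c_4\ge\tfrac1{16}$ is witnessed directly by Toft's graph (Figure~\ref{t}), which is triangle-free, $4$-critical and has $\tfrac1{16}n^2+n$ edges. So only $k=5$ and $k\ge6$ need new constructions.

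For $k\ge 6$ I would build each graph from a large complete bipartite ``engine'' that carries essentially all the edges, together with a gadget of sublinear order that both raises the chromatic number and enforces criticality. Concretely: take parts $A,B$ with $|A|=|B|=(\tfrac12-o(1))n$, keep $A$ and $B$ independent, put all $A$--$B$ edges in (so $|E|\ge|A||B|=(\tfrac14-o(1))n^2$ already, and the total stays $(\tfrac14-o(1))n^2$ as long as the gadget has $o(n)$ vertices), and attach to $A\cup B$ a triangle-free gadget $H$ on $o(n)$ vertices by edges chosen so that the whole graph stays triangle-free --- since $A$ and $B$ are independent this reduces to the local condition that no two adjacent vertices of $H$ have a common neighbour in $A$, and likewise in $B$. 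The gadget must do two jobs at once. First, in any proper $(k-1)$-colouring the complete bipartiteness between $A$ and $B$ forces $c(A)$ and $c(B)$ to be \emph{disjoint} colour sets, so $|c(A)|+|c(B)|\le k-1$; $H$ must force $|c(A)|$ and $|c(B)|$ (or some more global colour quantity) to be large enough to violate this, giving $\chi\ge k$. Second, $H$ must still be properly colourable when $A$ or $B$ is coloured in one of the near-monochromatic patterns that criticality of the bipartite edges will require.

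The steps, in order, would be: (1) write down $H$ and its attachment as an explicit parametrised family; (2) verify triangle-freeness by ruling out the three possible kinds of triangle (two vertices in $A$ or in $B$; one in each; all inside $H$), which is the local attachment condition above; (3) exhibit an explicit proper $k$-colouring, giving $\chi\le k$; (4) prove $\chi\ge k$ via the colour-counting argument, whose heart is a short lemma that $H$ forces the colour sets on $A$ and $B$ past the $k-1$ budget; (5) prove criticality: for each edge $e$ produce a proper $(k-1)$-colouring of $G-e$, the substantive cases being the edges inside the gadgets on $A$ and $B$ and, above all, the $\Theta(n^2)$ edges $ab$ with $a\in A$, $b\in B$, where one needs a $(k-1)$-colouring giving $a$ and $b$ the \emph{same} colour --- so one colours $A$ and $B$ in extreme-but-legal patterns and checks that $H$ still extends; (6) count the edges, obtaining $|A||B|+o(n^2)$. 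For $k=5$, since a proper blow-up of a fixed triangle-free graph $\Gamma$ can only approach density $\tfrac14$ in the degenerate limit of putting all weight on a single edge, I would instead use an unbalanced blow-up of a small $\Gamma$ richer than $K_2$, with a carefully tuned weighting and a $5$-forcing, criticality-preserving gadget, pushing the constant up to $\tfrac4{31}$ --- improving Gy\'arf\'as's $\tfrac{13}{256}$ and matching Toft's density for ordinary $5$-critical graphs.

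The main obstacle is step (5) together with keeping $H$ sublinear: the forcing needed for $\chi\ge k$ in step (4) wants the colourings of $A$ and $B$ spread across many colours, whereas criticality of the bipartite edges wants $H$ to tolerate colourings of $A$ and $B$ that are as nearly monochromatic as possible --- so $H$ must be simultaneously rigid and flexible, while touching the huge sets $A,B$ through only $o(n)$ vertices and never closing a triangle. Designing such an $H$ is the crux, and it is presumably exactly why density $\tfrac14$ becomes reachable only at $k\ge6$: a hypothetical $(k-1)$-colouring then has at least five colours to play with, which is the room $H$ needs to pin $\{|c(A)|,|c(B)|\}$ from below and yet remain colourable in the near-monochromatic regimes that criticality forces, whereas for $k=4,5$ one is driven to trade density away for that flexibility.
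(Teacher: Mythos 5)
Your high-level strategy for $k\ge 6$ matches the paper's: a complete bipartite engine between independent sets $A$ and $B$ carrying essentially all $\Theta(n^2)$ edges, plus gadgets of $o(n)$ vertices attached to $A$ and $B$ that force, in any $(k-1)$-colouring, enough colours onto each side that $|c(A)|+|c(B)|\ge k>k-1$ contradicts the disjointness of $c(A),c(B)$ (the paper forces $\lceil k/2\rceil$ colours on one side and $\lfloor k/2\rfloor$ on the other). You also correctly identify the crux: the gadget must be small, triangle-free, rigid enough to push colours up, yet flexible enough to accommodate near-monochromatic colourings of $A$ and $B$ when a single $A$--$B$ edge is deleted. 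But you stop there --- you explicitly write that ``designing such an $H$ is the crux'' and never construct it. That construction \emph{is} the theorem. The paper builds the gadget recursively as $U^{k-1}_{k-i+1}=U(k-1,k-2,\dots,k-i+1)$: take disjoint triangle-free $(k-j)$-critical graphs $S_1,\dots,S_{i-1}$, let the active set be the Cartesian product $A=\prod V(S_j)$, and join each $S_j$ to $A$ coordinate-wise. Lemma~\ref{setcolor} then establishes by induction on $i$ exactly the three properties you need: a $(k-1)$-colouring putting $i$ colours on $A$ with the $i$-th at one prescribed active vertex; every $(k-1)$-colouring puts $\ge i$ colours on $A$; deleting any edge permits a $(k-1)$-colouring with $\le i-1$ colours on $A$. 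Point three is the ``criticality pass-through'' your step (5) asks for, and without some such recursive device the argument does not close. The $o(n)$ claim then drops out because for $k\ge 6$ the tower has depth $t\ge2$, so $|A|=s_1\cdots s_t$ dominates $\sum_j s_j$; one also needs the small divisibility lemma (Lemma~\ref{homap}) to make $|A(C_1)|=|A(C_2)|$ when $k$ is odd.

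For $k=5$ your sketch diverges from what actually works, and it is unclear it would reach $4/31$. You propose an ``unbalanced blow-up of a small $\Gamma$ richer than $K_2$''; but the paper's $G_5$ is not a blow-up of any fixed graph. It takes $C_1=U(4)$ --- a Toft graph perfect-matched to an equal-size independent set, so here the ``gadget'' is $\Theta(n)$ and contributes its own $\tfrac1{16}(s^1_1)^2$ edges --- and $C_2=U(4,3)$, whose active set is a $1-o(1)$ fraction of $C_2$, then joins the two active sets completely bipartitely and optimises the size ratio (active set of $C_2$ roughly $\tfrac{15}{8}$ times $|S^1_1|$). The constant $\tfrac{4}{31}$ comes from layering the Toft graph's own quadratic edge count on top of the bipartite engine, not from tuning a blow-up weighting. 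So for $k\ge 6$ the gap is the missing gadget construction and proof; for $k=5$ the proposed route itself is not the right one.
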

\noindent 
In the case $k=4$ our construction is the same as Toft's graph.  For $k\geq 6$, the upper bound of $\frac 1 4$ follows from Tur\'an's theorem just from the fact that the graphs don't have triangles.

Theorem \ref{dtf} follows from our constructions in Section \ref{construct}, which can be seen as generalizing the Toft graph.  The basic idea of the construction is the following: the Toft and Gy\'arf\'as constructions work by pasting graphs onto independent sets so that in any $(k-1)$-coloring of the graph, at least two different colors occur in each of the independent sets (and the `pasting' has suitable criticality-like properties as well).  In Section \ref{construct}, we recursively define a way of `critically' pasting graphs to independent sets so that we can be assured that in a $(k-1)$-coloring of a graph, $\fcl{\frac k 2}$ colors occur in each independent set (i.e., $\flr{\frac k 2}$ in one and $\cel{\frac k 2}$ in the other).  Moreover the pasted graphs will be negligible in size compared with the independent sets.  From this point, we simply defer back to the idea of Toft's construction, joining two such independent sets in a complete bipartite graph.  (One could also use Gy\'arf\'as's graph as model for the general case, but this gives less dense graphs.)

A natural extension of the triangle-free case is of course to avoid higher-order odd-cycles.  A theorem of Stiebitz on iterated generalized Mycielski operations, in combination with our construction, will allow us to address this case as well.
\begin{theorem}
\label{t.og}
For each $k\geq 4$, $\ell\geq 5$, there is a dense family of critical graphs of odd-girth $>\ell$ whose members have unbounded chromatic number.  In particular, the density constants satisfy $c_{\ell,4}\geq \frac{1}{(\ell+1)^2}$, $c_{\ell,5}\geq \frac{1}{2(\ell+1)}$, and $c_{\ell,k}=\frac 1 4$ for $k\geq 6$.
\end{theorem}
\noindent Theorem \ref{t.og} is nonconstructive, since one step of the `construction' involves a deletion argument; the proof is in Section \ref{nopents}.  The density for the cases $k\geq 6$ is best possible even if we only wanted to avoid odd cycles of length exactly $\ell$ as a consequence of the classical Erd\H{o}s-Stone theorem \cite{es}, since odd-cycles are 3-chromatic.

Theorems \ref{dtf} and \ref{t.og} determine the exact values of $c_{\ell,k}$ for $k\geq 6$ and so the triangle-free graphs, and more generally, odd-girth-$\ell$ graphs, are perhaps the only examples of natural families of graphs where the maximum possible edge-density in $k$-critical graphs is now asymptotically known ($k\geq 6$); moreover, in this case, the requirement of $k$-criticality does not affect the maximum possible edge density asymptotically. In terms of the constants $c_{\ell,k}$, the results of Theorems \ref{dtf} and \ref{t.og} are summarized in Table \ref{ctable} (in Section \ref{Qs}).

\subsection*{Acknowledgment}
I'd like to thank Andr\'as Gy\'arf\'as for some helpful discussions on this question; particularly, for giving his construction (Figure \ref{gyarfas}) for the case $k=5$.  I am also indebted to an anonymous referee for mentioning the possible relevance of Theorem \ref{t.stieb}; this allowed extending the nonconstructive Theorem \ref{t.og} from the case of odd-girth 7 to arbitrary odd-girth.

\section{Avoiding triangles (constructions)}
\label{construct}
To construct critical triangle-free graphs, we will construct graphs $U$ which have large independent sets (essentially as large as $U$ for $k\geq 6$ in fact) in which $\fcl{\frac k 2}$ colors must show up in a $(k-1)$-coloring.  (In fact, our techniques would allow us to require that all $k-1$ colors show up in the independent set, but this is not the best choice from a density perspective.)  The $U$'s will have criticality-like properties as well (Lemma \ref{setcolor}), and  we will be able to join the relevant independent sets from two such $U$'s in a complete bipartite graph to get our construction.  Let us now specify the construction of the sets $U$.

Given graphs $S_1,S_2,\dots S_t$, we construct $U(S_1,S_2,\dots,S_t)$ as follows: take the (disjoint) union of the graphs $S_i$, together with the independent set $A=\prod V(S_i)$, which we call the \emph{active} set of vertices, and join vertices $v\in S_i$ to vertices $u\in A$ which equal $v$ in their $i$'th coordinate.  This construction is illustrated for $t=2$ in Figure \ref{Us}.  Where $r_1,r_2,\dots r_t$ are positive integers, we write $U(r_1,r_2,\dots r_t)$ to mean a construction $U(S_1,S_2,\dots,S_t)$ where, for each $i$,  $S_i$ is a triangle-free $r_i$-critical graph without isolated vertices.

With just one graph $S$, $U(S)$ simply consists of a matching between $S$ and an independent set of the same size. Thus the Toft graph is two identical copies of a $U(3)$, whose active sets are joined in a complete bipartite graph.  Gy\'arf\'as's construction is now four copies $C_1,C_2,C_3,C_4$ of a $U(4)$, where the active sets of $C_i$ and $C_{i+1}$ are joined in a complete bipartite graph for $1\leq i\leq 3$, and the active sets of both $C_1$ and $C_4$ are completely joined to a new vertex $x$.

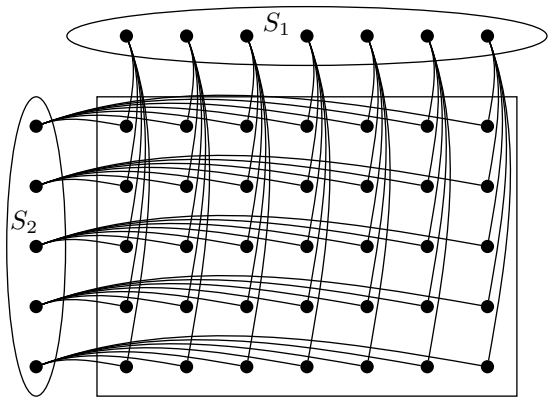
\begin{figure}[t!]
  \begin{center}
    \psset{xunit=.8cm,yunit=.8cm,linewidth=.5pt}
    \begin{pspicture}(0,1)(9,7)
      \psellipse(5,6.5)(4,.5)
      \rput(4.5,6.7){$S_1$}
      \VX{2,6.5}{s11}
      \VX{3,6.5}{s12}
      \VX{4,6.5}{s13}
      \VX{5,6.5}{s14}
      \VX{6,6.5}{s15}
      \VX{7,6.5}{s16}
      \VX{8,6.5}{s17}

      \psellipse(.5,3)(.5,2.5)
      \rput(0.3,3.4){$S_2$}
      \VX{.5,1}{s21}
      \VX{.5,2}{s22}
      \VX{.5,3}{s23}
      \VX{.5,4}{s24}
      \VX{.5,5}{s25}

\psframe(1.5,.5)(8.5,5.5)

      \multido{\nax=2+1,\ncx=1+1}{7}{
	\multido{\nay=1+1,\ncy=1+1}{5}{
	  \VX{\nax,\nay}{a\ncx\ncy}
	  \ncarc[arcangleA=20,arcangleB=10]{s1\ncx}{a\ncx\ncy} 
	  \ncarc[arcangleA=20,arcangleB=10]{s2\ncy}{a\ncx\ncy} 
	}
      }


    \end{pspicture}
  \end{center}
\caption{Constructing $U(S_1,S_2)$ from the graphs $S_1$ and $S_2$.  The rectangle encloses the set of active vertices.\label{Us}  (Edges within the $S_i$ are not drawn.)}
\end{figure}

\begin{observ}
$U(r_1,r_2,\dots,r_t)$ is triangle-free, and if $s_i=\abs {S_i}$, it has $s_1\cdots s_t$ active vertices, and $s_i$ structural vertices of type $i$.
\qed
\label{Utfs}
\end{observ}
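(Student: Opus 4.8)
The two enumeration claims are immediate from the definition of the construction: the active set is $A=\prod_{i}V(S_i)$, so $\abs{A}=\prod_i\abs{S_i}=s_1\cdots s_t$, and the structural vertices of type $i$ are by definition precisely the vertices of $S_i$, of which there are $s_i$. So the real content is triangle-freeness.

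The plan is to exploit the partition of $V(U)$ into the active set $A$ and the structural sets $V(S_1),\dots,V(S_t)$, recording three facts about adjacencies in $U$: (i) $A$ is independent; (ii) there is no edge between $V(S_i)$ and $V(S_j)$ for $i\neq j$, since $U$ contains the $S_i$ as a disjoint union and every remaining edge joins some $S_i$ to $A$; and (iii) each active vertex $u=(u_1,\dots,u_t)$ has exactly one neighbor in each $V(S_i)$, namely $u_i$ --- which is just a restatement of the adjacency rule ($v\in S_i$ is joined to $u$ iff $v=u_i$).

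Given these, I would argue by contradiction: suppose $U$ contains a triangle on vertices $x,y,z$. By (i) at most one of $x,y,z$ lies in $A$, so at least two are structural; since a triangle is connected, (ii) then forces \emph{all} structural vertices of the triangle into a single $S_i$. If all of $x,y,z$ lie in $S_i$ we get a triangle in $S_i$, contradicting its triangle-freeness. Otherwise exactly one vertex, say $z$, lies in $A$ while $x,y$ are two distinct neighbors of $z$ in $V(S_i)$, contradicting (iii). Either way we reach a contradiction, so $U$ is triangle-free.

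I do not anticipate a genuine obstacle here; the only thing to be careful about is that the case split on the locations of the three triangle vertices is exhaustive, which facts (i) and (ii) together with the connectivity of a triangle guarantee. (Note that the hypotheses that the $S_i$ be critical and have no isolated vertices play no role; only the triangle-freeness of the $S_i$ is used.)
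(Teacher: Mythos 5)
Your proof is correct. The paper states this observation with no proof at all (it is declared immediate with a \qed), and your argument --- decomposing $V(U)$ into the independent active set $A$ and the pairwise non-adjacent structural pieces $V(S_1),\dots,V(S_t)$, then noting each active vertex has exactly one neighbor in each $S_i$ --- is precisely the natural justification the author leaves unstated.
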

\noindent The \emph{structural vertices} of type $i$ are vertices from the copies of $S_i$ used in the construction of $U(r_1,r_2,\dots r_t)$.

Our inductive proof of Lemma \ref{setcolor} will use the following properties of the sets $U(S_1,\dots,S_t)$:
\begin{observ}
  Let $A_{\omega}\sbs \prod V(S_i)$ be the set of all vertices of the active set whose $t$\emph{th} coordinate is the vertex $\omega\in S_t$.  Then the subgraph of $U(S_1,\dots S_t)$ induced by the set $A_\omega\cup \bigcup\limits_{1\leq i< t} V(S_i)$ is isomorphic to $U(S_1,\dots,S_{t-1}).$\label{induct}\qed
\end{observ}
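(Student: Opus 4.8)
The plan is to exhibit an explicit vertex bijection and check that it is a graph isomorphism; nothing deep is involved here, so the whole argument comes down to keeping the coordinates straight.

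First I would define the candidate map $\phi$ on $A_\omega \cup \bigcup_{1\le i<t} V(S_i)$: let $\phi$ be the identity on each copy of $S_i$ with $i<t$, and send an active vertex $u=(u_1,\dots,u_{t-1},\omega)\in A_\omega$ to the tuple $(u_1,\dots,u_{t-1})$. Since the $t$th coordinate of a vertex of $A_\omega$ is pinned to $\omega$ while the first $t-1$ coordinates range over all of $\prod_{1\le i<t}V(S_i)$, the restriction of $\phi$ to $A_\omega$ is a bijection onto the active set of $U(S_1,\dots,S_{t-1})$; together with the identity on the structural vertices of types $1,\dots,t-1$, this makes $\phi$ a bijection onto the vertex set of $U(S_1,\dots,S_{t-1})$.

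Next I would verify that adjacency is preserved in both directions by running through the short list of edge types. In $U(S_1,\dots,S_t)$ the only edges among the vertices of $A_\omega\cup\bigcup_{i<t}V(S_i)$ are: (i) edges lying inside some copy of $S_i$ with $i<t$; and (ii) edges $vu$ with $v\in S_i$, $u\in A_\omega$, $i<t$, and $u_i=v$. Type (i) edges are fixed by $\phi$ and are exactly the internal edges of the copies of $S_i$, $i<t$, in $U(S_1,\dots,S_{t-1})$. For type (ii), dropping the last coordinate leaves the $i$th coordinate ($i<t$) unchanged, so $u_i=v$ if and only if $\phi(u)_i=v$; hence these edges correspond precisely to the structural-to-active edges of $U(S_1,\dots,S_{t-1})$. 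Conversely every edge of $U(S_1,\dots,S_{t-1})$ has one of these two forms, so $\phi^{-1}$ also preserves adjacency and $\phi$ is the desired isomorphism.

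I do not expect a genuine obstacle here; the only point deserving a moment's attention is confirming that the induced subgraph contains no edges beyond those in (i) and (ii). This is immediate: $A$ is an independent set, so no edge joins two active vertices, and every edge of $U(S_1,\dots,S_t)$ incident to $V(S_t)$ runs between $S_t$ and $A$ and is therefore destroyed once $V(S_t)$ is removed — so nothing unexpected survives into the subgraph induced by $A_\omega\cup\bigcup_{i<t}V(S_i)$.
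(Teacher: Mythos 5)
Your proof is correct. The paper states Observation~\ref{induct} without proof (the \textsf{\qedsymbol} appears immediately after the statement, so the author treats it as immediate from the definition of $U(S_1,\dots,S_t)$), and your argument---projecting $A_\omega$ onto its first $t-1$ coordinates, checking the two edge types, and noting that removing $V(S_t)$ destroys every edge incident to $S_t$ while $A$ itself is independent---is exactly the direct verification the paper implicitly leaves to the reader.
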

\begin{observ}
 \label{extend} Any colorings of the graphs $S_1,S_2,\dots S_t$ can be extended to the rest of $U(S_1,S_2,\dots,S_t)$ (\emph{i.e.}, to the active vertices) so that just $t+1$ different colors of our choosing appear in the active set.
\end{observ}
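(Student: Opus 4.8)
The plan is a one-pass greedy colouring of the active vertices. First I would isolate the structural fact that drives everything: by the definition of $U(S_1,\dots,S_t)$ the active set $A=\prod V(S_i)$ is independent, and an active vertex $a=(v_1,\dots,v_t)$ is adjacent only to the $t$ structural vertices $v_1\in S_1,\dots,v_t\in S_t$ --- one in each coordinate. So once proper colourings $c_1,\dots,c_t$ of the $S_i$ are fixed, each active vertex sees at most $t$ already-assigned colours and nothing else, and distinct active vertices are never adjacent to one another.

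Then, given the set $C$ of $t+1$ colours we want to see on $A$, I would colour each active vertex $a=(v_1,\dots,v_t)$ by any element of $C\sm\{c_1(v_1),\dots,c_t(v_t)\}$; this set is nonempty since $\abs{C}=t+1$ exceeds the number $\leq t$ of forbidden colours. Because the active vertices are pairwise nonadjacent these choices cannot conflict with one another, so we obtain a proper colouring of all of $U(S_1,\dots,S_t)$ extending the given ones and using on $A$ only colours from $C$. If in addition one wants every colour of $C$ actually to occur on $A$, I would first place each $\gamma\in C$ on one active vertex and then run the greedy step on the rest: a colour $\gamma$ fails to be placeable on \emph{some} active vertex only if every tuple in $\prod V(S_i)$ has a coordinate $v_i$ with $c_i(v_i)=\gamma$, which forces some $S_i$ to be monochromatic under $c_i$ --- impossible as soon as $S_i$ has an edge, in particular in every $U(r_1,\dots,r_t)$.

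I do not expect a genuine obstacle here: the entire content is the pigeonhole inequality ``$t$ constraints, $t+1$ colours,'' and the only point requiring care is verifying that $A$ really is independent and that an active vertex has exactly $t$ neighbours outside $A$, so that the greedy choices are truly independent of each other. (This is exactly the book-keeping that Observations~\ref{Utfs} and~\ref{induct} are set up to make routine.)
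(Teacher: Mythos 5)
Your proof is correct and is essentially the paper's argument spelled out in full: the paper's entire proof is the single sentence that each active vertex has degree $t$ in $U(S_1,\dots,S_t)$, which is exactly the greedy/pigeonhole step you carry out, together with the (implicit) fact that the active set is independent. Your extra paragraph on forcing all $t+1$ colors to actually appear goes slightly beyond what the paper claims or later uses, but it is correct and does no harm.
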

\begin{proof}
This follows from observing that the degree of each active vertex in $U(S_1,S_2,\dots, S_t)$ is $t$.
\end{proof}

Before proceeding any further, we point out the following basic fact, which lies at the heart of all recursive constructions of critical graphs:
\begin{observ}
A $k$-critical graph without isolated vertices can be $k$-colored so that color $k$ occurs only at a single vertex of our choice.
\label{vc}
\end{observ}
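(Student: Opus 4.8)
The plan is to exploit criticality at a single edge incident to the prescribed vertex. Let $G$ be $k$-critical with no isolated vertices, and let $v$ be the vertex at which we want color $k$ to appear. Since $v$ is not isolated, I would first fix some neighbor $u$ of $v$. By the definition of $k$-criticality, the graph $G$ with the edge $uv$ deleted admits a proper $(k-1)$-coloring $c$. The key observation is that $c$ must assign $u$ and $v$ the same color: if $c(u)\neq c(v)$, then $c$ would be a proper $(k-1)$-coloring of all of $G$, contradicting $\chi(G)=k$.

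Next I would modify $c$ by recoloring $v$ with the fresh color $k$, leaving every other vertex with its $c$-color. To check that the result is a proper $k$-coloring of $G$: any edge of $G$ not incident to $v$ is also an edge of the edge-deleted graph, hence is still properly colored by $c$; and any edge incident to $v$ joins $v$ (now colored $k$) to a vertex whose color lies in $\{1,\dots,k-1\}$, so it too is properly colored. Since color $k$ is used only at $v$, this is exactly the coloring we want, and $v$ was arbitrary.

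I do not expect any real obstacle in this argument; the only step that requires a moment's thought is the forced equality $c(u)=c(v)$, and that is immediate from the definition of the chromatic number together with criticality. (This is the standard fact that in a $k$-critical graph, the two endpoints of any edge receive a common color in a suitable $(k-1)$-coloring of the edge-deleted graph, here combined with the trivial observation that giving $v$ a brand-new color keeps the coloring proper.)
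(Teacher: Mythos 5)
Your proof is correct, and it does take a different route from the paper's. The paper's proof is a one-liner: a $k$-critical graph without isolated vertices is \emph{vertex}-critical, so $G-v$ admits a proper $(k-1)$-coloring, which one then extends by assigning $v$ the fresh color $k$. You instead stay at the level of \emph{edge}-criticality: delete an edge $uv$, obtain a $(k-1)$-coloring $c$ of $G-uv$, observe that necessarily $c(u)=c(v)$, and then recolor $v$ to $k$. Both arguments ultimately hand $v$ a brand-new color after producing a $(k-1)$-coloring of its surroundings, so the mechanism is the same; the difference is which standard consequence of criticality is invoked. Your version is slightly longer but more self-contained, since it derives what it needs directly from the definition of edge-criticality rather than citing the (equally standard, but separate) fact that edge-critical implies vertex-critical. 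The paper's version is shorter and also makes transparent why the ``no isolated vertices'' hypothesis is needed (an isolated $v$ would make $G-v$ already $k$-chromatic); in your version that same hypothesis is what guarantees the existence of the neighbor $u$. Either proof is acceptable.
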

\begin{proof}
This follows from observing that such a graph is \emph{vertex-critical}; i.e., removing any vertex decreases the chromatic number.
\end{proof}
\noindent We are now ready to prove the main lemma, about the sets  
\[
U^{k-1}_{k-i+1}\defn U(k-1,k-2,\dots,k-i+1).
\]  
Basically, it says that in a $(k-1)$-coloring, their active sets have a set of properties similar to criticality.

\begin{lemma} The sets $U^{k-1}_{k-i+1}= U(k-1,k-2,\dots,k-i+1)$ satisfy:
\begin{enumerate}
\item  \label{scolor} $U^{k-1}_{k-i+1}$ can be properly $(k-1)$-colored so that  $\leq i$ different colors appear at active vertices and so that among active vertices, only one vertex of our choosing gets the $i$th color.
\item  In any $(k-1)$-coloring of  $U^{k-1}_{k-i+1}$, at least $i$ different colors occur as the colors of active vertices.
\label{ascolor}
\item If any edge from $U_{k-i+1}^{k-1}$ is removed, it can be properly $(k-1)$-colored so that at most $i-1$ colors occur at active vertices.
\label{ascritical}
\end{enumerate}

\label{setcolor}
\end{lemma}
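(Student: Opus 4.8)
# Proof Proposal for Lemma \ref{setcolor}

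\textbf{Overall strategy.} The plan is to induct on $i$. The base case $i=1$ concerns $U_{k-1}^{k-1} = U(k-1)$, which by the remark following Figure~\ref{Us} is just a matching between a $(k-1)$-critical graph $S$ (on, say, the vertex set that forms the structural vertices) and an independent active set of the same size, with each active vertex having degree exactly $1$. For the inductive step, I would exploit Observation~\ref{induct}, which exhibits $U(S_1,\dots,S_{t-1})$ inside $U(S_1,\dots,S_t)$ as the subgraph induced by $A_\omega \cup \bigcup_{i<t} V(S_i)$ for each fixed $\omega \in S_t$; the active set of the big graph is the disjoint union $\bigsqcup_{\omega \in S_t} A_\omega$ of copies of the smaller active set, and the only edges between distinct copies run through the structural vertices of $S_t$ (a vertex $v \in S_t$ is joined to exactly those active vertices whose $t$th coordinate is $v$, i.e.\ to all of $A_v$).

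\textbf{Part~\ref{scolor} (existence of an economical coloring).} For the base case, color $S=S_{k-1}$ using Observation~\ref{vc} so that color $k-1$ appears at only the one structural vertex matched to the prescribed active vertex; then use Observation~\ref{extend} (each active vertex has degree $1$) to color the active vertices so exactly the one chosen vertex gets color $i=1$'s designated color while all others reuse some single other color — wait, more carefully: we want $\le 1$ color among active vertices with one special vertex getting ``the $1$st color,'' so all active vertices get one color except possibly the special one. For the inductive step: let $U_{k-i+1}^{k-1} = U(S_1,\dots,S_i)$ where $S_i$ is $(k-i+1)$-critical. By Observation~\ref{vc}, $(k-i+1)$-color $S_i$ so that the color ``$k-i+1$'' (viewed as one of the $k-1$ available colors) occurs at exactly one structural vertex $v_0$ of type $i$. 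For each $\omega \in S_i$ with $\omega \ne v_0$, the copy $U(S_1,\dots,S_{i-1})$ sitting on $A_\omega$ receives (by induction, Part~\ref{scolor}) a $(k-1)$-coloring using $\le i-1$ colors at its active vertices, which we arrange to avoid the color of $\omega$ — here I'd want the inductive statement to give me enough freedom in \emph{which} $i-1$ colors are used; since the colors of the $S_j$ for $j<i$ are being chosen by us too, this should be arrangeable. On the one special copy $A_{v_0}$ we allow the $i$th color, placed at a single active vertex of our choice via induction. The key check is triangle-freeness is automatic (Observation~\ref{Utfs}) and that the coloring is proper across the new edges through $S_i$, which holds because on $A_\omega$ we avoided $\omega$'s color.

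\textbf{Parts~\ref{ascolor} and~\ref{ascritical}.} Part~\ref{ascolor} is the heart of the lemma and the main obstacle. Given an arbitrary $(k-1)$-coloring of $U(S_1,\dots,S_i)$, I want to show $\ge i$ colors appear among active vertices. The copy of $S_i$ uses some set of colors; since $S_i$ is $(k-i+1)$-critical, it is not $(k-i)$-colorable, so it uses $\ge k-i+1$ colors — but that is a statement about \emph{structural} vertices, and I need to push color-count information onto the active set. The mechanism: consider any vertex $\omega \in S_i$ with some color $c(\omega)$. The induced copy of $U(S_1,\dots,S_{i-1})$ on $A_\omega$ (Observation~\ref{induct}) inherits a $(k-1)$-coloring, so by induction its active vertices see $\ge i-1$ colors; moreover every active vertex of $A_\omega$ is adjacent to $\omega$, so none of those $\ge i-1$ colors is $c(\omega)$. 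If I can find a vertex $\omega$ whose color $c(\omega)$ is \emph{not} among the $i-1$ colors forced in $A_\omega$ — which is immediate since $c(\omega)$ is excluded from $A_\omega$ — then... that alone only gives me $i-1$ colors plus knowledge that $c(\omega)$ differs from them, but $c(\omega)$ itself need not appear on an active vertex. The fix is to use criticality of $S_i$ more cleverly: the set of colors appearing on active vertices, call it $D$, has the property that for every $\omega \in S_i$, the colors in $A_\omega$ (a subset of $D$ of size $\ge i-1$ avoiding $c(\omega)$) lie in $D \setminus \{c(\omega)\}$; if $|D| \le i-1$ then for every $\omega$, $A_\omega$ must use \emph{all} of $D$ minus $c(\omega)$, forcing $c(\omega) \notin D$, i.e.\ $S_i$ is colored entirely with colors outside $D$; but then $D$ together with the colors of $S_i$... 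I would derive a contradiction with $(k-i+1)$-criticality of $S_i$ by a counting/recoloring argument — essentially, if $S_i$ avoids all $i-1$ colors of $D$, then $S_i$ is colored with $\le (k-1)-(i-1) = k-i$ colors, contradicting that $S_i$ is not $(k-i)$-colorable. That resolves Part~\ref{ascolor}. For Part~\ref{ascritical}: an edge $e$ of $U(S_1,\dots,S_i)$ is either inside some $S_j$, or between a structural vertex and an active vertex. If $e \in S_i$: since $S_i$ is $(k-i+1)$-critical, $S_i - e$ is $(k-i)$-colorable, so color it with only $k-i$ colors; then color each copy on $A_\omega$ using Part~\ref{scolor}'s economical coloring avoiding $c(\omega)$, ending with $\le i-1$ active colors. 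If $e$ lies in some $S_j$ with $j<i$, or is a structural–active edge inside one copy $A_{\omega_0}$, use induction (Part~\ref{ascritical}) on that copy to get $\le i-2$ active colors there while keeping $\le i-1$ elsewhere via Part~\ref{scolor}. The bookkeeping about which colors are reused where is the routine-but-delicate part I'd need to nail down carefully, but the structural skeleton above is the plan.
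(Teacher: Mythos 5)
Your proposal follows essentially the same inductive route as the paper: induct on $i$, decompose $U^{k-1}_{k-i+1}$ into copies of $U^{k-1}_{k-i+2}$ indexed by the vertices of the $(k-i+1)$-critical graph (which the paper calls $S_{i-1}$; your $S_i$ is an off-by-one slip, since $U(k-1,\dots,k-i+1)$ has only $i-1$ arguments), use criticality of that graph for Part~\ref{ascolor}, and do a case split for Part~\ref{ascritical}. Your argument for Part~\ref{ascolor} is correct and matches the paper. However, there are two genuine gaps in the execution.

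First, in Part~\ref{scolor}: the copies $C_\omega$ are \emph{not} vertex-disjoint---they share the structural graphs $S_1,\dots,S_{i-2}$, and differ only in their active sets $A_\omega$. You propose to invoke the inductive Part~\ref{scolor} on ``each copy,'' but the induction hypothesis produces a coloring of the entire copy, including $S_1,\dots,S_{i-2}$; applying it independently for each $\omega$ would produce conflicting colorings of the shared structural vertices. You gesture at this (``since the colors of the $S_j$ for $j<i$ are being chosen by us too, this should be arrangeable''), but the resolution isn't an appeal to the inductive Part~\ref{scolor} for each copy. The paper applies induction exactly once, to the single distinguished copy $C_{\omega_0}$ (fixing a coloring of $S_1,\dots,S_{i-2}$), and then extends to the remaining active sets $A_\omega$ ($\omega\neq\omega_0$) by the low-degree Observation~\ref{extend}: since each active vertex has degree $i-1$, any coloring of the structural graphs extends to the active vertices using only $i-1$ prescribed colors that avoid $\omega$'s color. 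You need that separate mechanism.

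Second, in Part~\ref{ascritical}: you split into ``$e$ in $S_{i-1}$'' and ``$e$ inside some copy (in $S_j$ for $j<i-1$, or a structural--active edge inside that copy).'' But this misses the third possibility: an edge joining a vertex $\omega_0\in S_{i-1}$ to an active vertex $v\in A_{\omega_0}$. Such an edge is not in $S_{i-1}$, and it is also not inside the subgraph $C_{\omega_0}\cong U^{k-1}_{k-i+2}$, since $\omega_0\notin C_{\omega_0}$. It therefore cannot be handled by inductive Part~\ref{ascritical} on a subcopy and needs its own short argument (the paper: color $S_{i-1}$ by Observation~\ref{vc} so color $i-1$ sits only at $\omega_0$, color $C_{\omega_0}$ by inductive Part~\ref{scolor} so colors $1,\dots,i-1$ appear at active vertices with $i-1$ only at $v$; properness of the missing edge $\omega_0 v$ is then automatic, and the other copies are extended with colors $1,\dots,i-1$).
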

\begin{proof}
We prove Lemma \ref{setcolor} by induction on $i$.  For $i=1$, we interpret $U^{k-1}_k$ as simply a single active vertex and the statement is trivial. Recall that the graphs $S_j$ ($1\leq j\leq i-1$) are the triangle-free $(k-j)$-critical graphs used in the construction of $U^{k-1}_{k-i+1}$.  For $i>1$, Observation \ref{induct} tells us that $U^{k-1}_{k-i+1}$ consists of $S_{i-1}$ together with $s_{i-1}=\abs{S_{i-1}}$ copies $C_\omega$ of a $U^{k-1}_{k-i+2}$, one for each vertex $\omega\in S_{i-1}$, which are pairwise disjoint except at the graphs $S_1,S_2,\dots, S_{i-2}$ (in other words, their active sets $A(C_\omega)$ are disjoint).  Note that each $\omega\in S_{i-1}$ is adjacent to every vertex in $A(C_\omega)$.  

To prove part \ref{scolor}, color $S_{i-1}$ with colors $i-1$ through $k-1$ so that color $i-1$ occurs only at vertex $\omega_0$ (we can do this by Observation \ref{vc}).  By induction, $(k-1)$-color $C_{\omega_0}$ so that its active vertices get colors from $1,2,\dots,i-2,i$ and so that color $i$ occurs at only one vertex $u$ of $C_{\omega_0}$.  Now we can use Observation \ref{extend} to extend the current partial $(k-1)$-coloring to the rest of the $C_\omega$'s so that their active vertices get colors from 1 through $i-1$; this gives us a $(k-1)$-coloring of $U^{k-1}_{k-i+1}$ where active vertices get colors from 1 through $i$, and $i$ occurs only at $u$.  Finally, note that we could have chosen so that $u$ was any active vertex.

For part \ref{ascolor}, notice that by induction, $i-1$ different colors occur at active vertices of the $C_\omega$'s in a $(k-1)$-coloring of $U^{k-1}_{k-i+1}$.  Thus if only $i-1$ colors appear at active vertices of $U^{k-1}_{k-i+1}$, the same set of $i-1$ colors occurs at the sets of active vertices of each of the $C_\omega$'s; but then $S_{i}$ is colored with a disjoint set of colors, but then we need $(i-1)+(k-i+1)>k-1$ colors overall, a contradiction.

For part \ref{ascritical}, assume first that the removed edge came from $S_{i}$:  Then we can $(k-i)$-color what remains, and (inductively by part \ref{scolor}) color the $C_\omega$'s so that only the leftover $i-1$ colors occur at the active vertices.  

If the removed edge was one joining a vertex $\omega_0$ in $S_i$ to a vertex $v$ in the active set of $C_{\omega_0}$, we color $S_i$ with the colors $i-1$ through $k-1$ so that color $i-1$ occurs only at vertex $\omega_0$ (by Observation \ref{vc}), and color $C_{\omega_0}$ so that only colors $1,2,\dots,i-1$ occur at its active vertices, and so that color $i-1$ occurs only at $v$ among active vertices (by part \ref{scolor}, inductively).  Coloring the other $C_\omega$'s so that only colors $1,2,\dots,i-1$ occur at their active vertices (again part \ref{scolor}), we have a proper $(k-1)$-coloring of $U^{k-1}_{k-i+1}$ where only colors 1 through $i-1$ appear at active vertices.

If instead the removed edge was from one of the copies $C_{\omega_0}$, we color that copy (by induction) so that only colors $1,2,\dots,i-2$ occur at its active vertices, and $S_{i}$ with colors $i-1$ through $k-1$ so that $i-1$ occurs only at the vertex $\omega_0$ (by Observation \ref{vc}).  Then coloring the rest of the $C_{\omega}$ so that only colors $1$ through $i-1$ appear at active vertices (by part \ref{scolor}), we have a proper $(k-1)$-coloring of $U^{k-1}_{k-i+1}$ where only colors 1 through $i-1$ appear at active vertices.
\end{proof}

\subsection{Constructing $G_k$}
\label{gk}
We take $G_k$ to be the union of a $C_1=U^{k-1}_{\cel{\frac k 2} +1}$ with a $C_2=U^{k-1}_{\flr{\frac k 2}+1}$; the active sets are joined in a complete bipartite graph.  (For $k=4$ this gives the Toft graph.)  $G_k$ is $k$-colorable, since part \ref{scolor} of Lemma \ref{setcolor} implies that we can $(k-1)$-color each $C_i$ so that the sets of active vertices of each get just $\flr{\frac k 2}$ and $\cel{\frac k 2}$ colors, respectively.  Moreover part \ref{ascolor} of Lemma \ref{setcolor} implies that $G_k$ cannot be $(k-1)$-colored.  $G_k$ is triangle-free since the $C_i$'s are triangle-free, the new edges form a bipartite graph, and they are added to an independent set of vertices.

For criticality there are now just two cases to check.  If a removed edge belongs to (for example) $C_1$, then by part \ref{ascritical} of Lemma \ref{setcolor} we can color $C_1$ with colors $1,\dots,k-1$ so that only colors $1,\dots,\flr{\frac k 2}-1$ occur at its active vertices.  Then, coloring $C_2$ with $1,\dots,k-1$ so that only $\flr{\frac k 2},\flr{\frac k 2}+1,\dots, k-1$ occur at its active vertices, we have given a proper $(k-1)$-coloring.

Otherwise the removed edge is between active vertices of $C_1$, and $C_2$.  We can color each of the $C_i$ with colors $1,\dots,k-1$ so that only colors $1,\dots ,\flr{\frac k 2}$ appear at active vertices of $C_1$, and only $\flr{\frac {k} 2},\dots, k-1$ appear at active vertices of $C_2$, and (by part \ref{scolor} of Lemma \ref{setcolor}) require that color $\flr{\frac k 2}$ occurs only at the end-vertices of the removed edge, so that this $(k-1)$-coloring is proper.

 \subsection{Density of $G_k$ (for $k\geq 6$)}
\label{density}
 To estimate the density of $G_k$, recall that  $C_p$, $p\in \{1,2\}$, will have $s^p_1s^p_2\cdots s^p_{t_p}$  active vertices and $s^p_j$ structural vertices of type $j$, where here $s^p_j$ is the number of vertices in the graph $S^p_j$, the $(k-j)$-critical graph used in the construction of $C_p=U^{k-1}_{\fcl{\frac k 2}+1}=U(k-1,k-2,\dots,\fcl{\frac k 2}+1)$.  

Now so long as $k\geq 6$, we have that $t_1,t_2\geq 2$ and so by choosing $s^p_1,s^p_2$ suitably large for each $p\in \{1,2\}$, we assure that a $1-\ep'$ fraction of the vertices from each $C_p$ are active, giving that $G_k$ has $\geq (\frac 1 4-\ep)n^2$ edges for all $k\geq 6$, so long as we can take $C_1,C_2$ to be the same size.

For the case where $k$ is even, we can let them be identical, so if we are not interested in good constants for the case where $k$ is odd, we could finish by using the Mycielski operation to get the case `$k$ is odd' from the case `$k$ is even' (giving a density constant of $c_k\geq \frac{3}{16}$ for the odd $k$).  To prove best-possible density constants, we will want to argue that we can take $C_1,C_2$ to be the same size even if $k$ is odd.  This will follow from Lemma \ref{homap} below.

An arithmetic progression is called homogeneous if it includes the point 0.  We call a subset of $\Z^+$ \emph{positive homogeneous} if it contains (all) the positive terms of an infinite homogeneous arithmetic progression.  We have:
\begin{observ}
  The sums and products of positive homogeneous sets are positive homogeneous sets.\qed
\label{+*hp}
\end{observ}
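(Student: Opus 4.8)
The plan is to reduce the statement to an elementary fact about multiples by unwinding the definition. First I would record the concrete reformulation: since an infinite homogeneous arithmetic progression has nonzero common difference, a set $A\subseteq\Z^+$ is positive homogeneous if and only if there is a $d\in\Z^+$ with $\{d,2d,3d,\dots\}\subseteq A$, i.e.\ $A$ contains every positive multiple of $d$ (the set $\{d,2d,3d,\dots\}$ is exactly the set of positive terms of the homogeneous progression $0,d,2d,\dots$). So let $A$ and $B$ be positive homogeneous, with $d$ and $e$ such witnesses; since $A,B\subseteq\Z^+$ we automatically have $A+B\subseteq\Z^+$ and $A\cdot B\subseteq\Z^+$, and it remains only to produce a common difference for each of $A+B$ and $A\cdot B$.

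For the product this is immediate: for all $m,n\in\Z^+$ we have $dm\in A$ and $en\in B$, so $de\cdot mn=(dm)(en)\in A\cdot B$; as $m,n$ range over $\Z^+$ the value $mn$ runs over all of $\Z^+$ (take $n=1$), so every positive multiple of $de$ lies in $A\cdot B$, and $A\cdot B$ is positive homogeneous. For the sum, I would check that for every integer $k\geq 2$,
\[
de\cdot k \;=\; d\cdot e(k-1)\;+\;e\cdot d\;\in\;A+B,
\]
since $e(k-1)$ and $d$ are positive integers; hence $\{2de,3de,4de,\dots\}\subseteq A+B$, so in particular $A+B$ contains every positive multiple of $2de$ and is therefore positive homogeneous. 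An easy induction then extends both claims to any finite collection of positive homogeneous sets.

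There is no serious obstacle; the only point that needs care is definitional. ``Positive homogeneous'' requires containment of the positive part of a homogeneous progression, not equality, so we are free to pass to a coarser progression ($de\Z^+$ for the product, $2de\Z^+$ for the sum) sitting inside the set in question, and it is exactly this freedom that makes the argument trivial. (If one instead insisted that the common difference of $A+B$ be $\gcd(d,e)$, one would need a Chicken-McNugget/Frobenius bound to handle small terms, but no such precision is needed here.)
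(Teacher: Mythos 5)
The paper states this observation without proof, marking it as immediate with $\qed$; your argument is a correct and complete justification. The reformulation (positive homogeneous $\Leftrightarrow$ contains $d\Z^+$ for some $d\in\Z^+$), the choice of witness $de$ for the product, and the choice $2de$ for the sum (needed because $de$ itself need not be expressible as $dm+en$ with $m,n\geq 1$) are all exactly right and match the spirit in which the paper invokes the fact.
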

\noindent The important property we need is this:
\begin{observ}
The intersection of two positive homogeneous sets is a positive homogeneous set.\qed
\label{caphp}
\end{observ}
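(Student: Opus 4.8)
The plan is to reduce the statement to an elementary fact about the structure of positive homogeneous sets, then invoke the Chinese Remainder Theorem. Recall a set $X \subseteq \Z^+$ is positive homogeneous if there is some $d \in \Z^+$ such that $X \supseteq \{d, 2d, 3d, \dots\}$, i.e.\ $X$ contains all positive multiples of some fixed $d$ (the positive terms of the homogeneous arithmetic progression $0, d, 2d, \dots$). So suppose $X$ and $Y$ are positive homogeneous, witnessed by $d$ and $e$ respectively, so $X \supseteq d\Z^+$ and $Y \supseteq e\Z^+$.

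First I would observe that every positive multiple of $\mathrm{lcm}(d,e)$ lies in both $d\Z^+$ and $e\Z^+$, hence in $X \cap Y$. Explicitly, if $m = \mathrm{lcm}(d,e)$ then for any $j \in \Z^+$ we have $jm \in d\Z^+ \subseteq X$ and $jm \in e\Z^+ \subseteq Y$, so $jm \in X\cap Y$. Thus $X \cap Y \supseteq m\Z^+$, which is exactly the set of positive terms of the homogeneous arithmetic progression $0, m, 2m, \dots$. Therefore $X \cap Y$ is positive homogeneous, with common difference $m = \mathrm{lcm}(d,e)$.

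There is essentially no obstacle here — the only thing to be careful about is the bookkeeping in the definition: ``positive homogeneous'' only asks that $X$ \emph{contain} a full positive homogeneous progression, not that $X$ \emph{equal} one, so we are free to pass to the coarser common progression $m\Z^+$ without worrying about extra elements of $X$ or $Y$ that are not multiples of $d$ or $e$. (One could equivalently phrase this via CRT: $d\Z \cap e\Z = \mathrm{lcm}(d,e)\Z$, but the one-line containment argument above is cleaner and needs no coprimality hypotheses.) This completes the proof. \qed
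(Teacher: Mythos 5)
Your proof is correct and is exactly the elementary argument the paper leaves implicit (Observation~\ref{caphp} is stated with a \qed and no written proof): if $X\supseteq d\Z^+$ and $Y\supseteq e\Z^+$, then $X\cap Y\supseteq\mathrm{lcm}(d,e)\,\Z^+$. The aside about the Chinese Remainder Theorem is superfluous, as you yourself note, but introduces no error.
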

\noindent In particular, the intersection is infinite, and so nonempty.  Because of this property, the following lemma regarding the $U^{k-1}_{k-i+1}$ implies that we can let $\abs{C_1}=\abs{C_2}$ in the case where $k$ is odd, finishing the proof of the density of the $G_k$:
\begin{lemma}
Let $k\geq 4,$ $1\leq i\leq k$, and fix triangle-free $(k-j)$-critical graphs $S_j$ of order $s_j$ for each $j>2$.  Then the set of natural numbers $n$ for which there is a $U^{k-1}_{k-i+1}$ on $n$ points constructed using the fixed graphs $S_2,S_3,\dots$ (so, we only have freedom in choosing the graph $S_1$) is positive homogeneous.  The same holds for the set of natural numbers $n'$ for which there exists a $U^{k-1}_{k-i+1}$ constructed from the $S_2,S_3,\dots$ which has exactly $n'$ active vertices.
\label{homap}
\end{lemma}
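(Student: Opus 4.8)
\emph{Overview of the plan.} The plan is to write the two sets of integers in the statement explicitly as images of a single ``base'' set --- the set of orders of triangle-free $(k-1)$-critical graphs --- under dilation and translation, and thereby to reduce the whole lemma to one claim about that base set. The combinatorial substance then sits entirely in that claim.

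\emph{Reduction.} Fix $S_2,S_3,\dots$ once and for all and set $c\defn\prod_{2\le j\le i-1}\abs{S_j}$ and $C\defn\sum_{2\le j\le i-1}\abs{S_j}$ (so $c=1$, $C=0$ when $i\le 2$). Since $U^{k-1}_{k-i+1}=U(S_1,\dots,S_{i-1})$, Observation \ref{Utfs} gives that the copy built from a graph $S_1$ of order $s_1$ has exactly $c\cdot s_1$ active vertices and $c\cdot s_1+s_1+C=(c+1)s_1+C$ vertices altogether. Hence, writing $\mathcal S_{m}$ for the set of orders of triangle-free $m$-critical graphs without isolated vertices, the set of attainable numbers of active vertices is the dilate $c\cdot\mathcal S_{k-1}$ and the set of attainable orders is the affine image $(c+1)\cdot\mathcal S_{k-1}+C$ (for the degenerate case $i=1$ this set is $\{1\}$, with nothing to prove). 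Dilating by a positive integer and adding a nonnegative constant each preserve positive homogeneity --- the same bookkeeping that underlies Observation \ref{+*hp} --- so both of these sets are positive homogeneous as soon as $\mathcal S_{k-1}$ is. (One may instead organise this as an induction on $i$, using Observation \ref{induct} to present $U^{k-1}_{k-i+1}$ as $\abs{S_{i-1}}$ copies of $U^{k-1}_{k-i+2}$ with pairwise disjoint active sets glued along $S_1,\dots,S_{i-2}$, so that its active-count is $\abs{S_{i-1}}$ times that of $U^{k-1}_{k-i+2}$ and its order is that active-count plus $\abs{S_1}+\sum_{2\le j\le i-1}\abs{S_j}$; but the direct computation above is shorter.) So it remains to prove that $\mathcal S_{k-1}$ is positive homogeneous.

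\emph{The claim that $\mathcal S_m$ is positive homogeneous --- the real obstacle.} This is the only step that actually manufactures graphs of many different orders rather than just tallying vertices. I would prove it for all $m\ge 3$ at once, by induction on $m$. For $m=3$ the relevant graphs are exactly the odd cycles $C_5,C_7,C_9,\dots$, so $\mathcal S_3=\{5,7,9,\dots\}$ and the base case is immediate. For $m\ge 4$ I would feed in the construction of Section \ref{gk}: the graph $G_m$ is triangle-free, $m$-critical and free of isolated vertices, and has $\abs{C_1}+\abs{C_2}$ vertices, where $C_1=U^{m-1}_{\cel{\frac m2}+1}$ and $C_2=U^{m-1}_{\flr{\frac m2}+1}$. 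The only graph left free inside either $C_p$ in the Reduction is a triangle-free $(m-1)$-critical graph, so by the inductive hypothesis (that $\mathcal S_{m-1}$ is positive homogeneous) together with the Reduction, the set of attainable values of $\abs{C_1}$ and the set of attainable values of $\abs{C_2}$ are each positive homogeneous; hence by Observation \ref{+*hp} so is the set of attainable values of $\abs{C_1}+\abs{C_2}$, which is a subset of $\mathcal S_m$. Therefore $\mathcal S_m$ is positive homogeneous, finishing the induction. (One could also bypass this bootstrap by producing, for each $m\ge 4$, a single order-increasing operation on triangle-free $m$-critical graphs that preserves triangle-freeness, $m$-criticality and the absence of isolated vertices, and iterating it from any such graph; but reusing the $G_m$'s costs nothing extra.)

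\emph{Conclusion.} Granting the claim, both $c\cdot\mathcal S_{k-1}$ and $(c+1)\cdot\mathcal S_{k-1}+C$ are positive homogeneous, which is exactly Lemma \ref{homap}; and then Observation \ref{caphp} is precisely what is needed in Section \ref{gk} to conclude, when $k$ is odd, that the sets of attainable orders of $C_1=U^{k-1}_{\cel{\frac k2}+1}$ and of $C_2=U^{k-1}_{\flr{\frac k2}+1}$ have a common element, i.e.\ that one may take $\abs{C_1}=\abs{C_2}$.
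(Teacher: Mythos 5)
Your explicit bookkeeping is correct: from Observation \ref{Utfs}, with $c=\prod_{2\le j\le i-1}s_j$ and $C=\sum_{2\le j\le i-1}s_j$, the active count is $cs_1$ and the total is $(c+1)s_1+C$. But two of the steps that follow are false under the paper's definition of \emph{positive homogeneous}, which requires that the set contain \emph{all} positive multiples of a single integer $d$, i.e.\ a set of the form $\{d,2d,3d,\dots\}$ --- not merely an infinite arithmetic progression.

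First, adding a nonnegative constant does \emph{not} preserve positive homogeneity, and this is not "the same bookkeeping that underlies Observation \ref{+*hp}": that observation concerns the sum of \emph{two} positive homogeneous sets, and the singleton $\{C\}$ is finite, hence not positive homogeneous. Concretely, $\{2,4,6,\dots\}+1=\{3,5,7,\dots\}$ contains no set $\{d,2d,3d,\dots\}$. So the positive homogeneity of $(c+1)\mathcal S_{k-1}+C$ simply does not follow from that of $\mathcal S_{k-1}$; one would need $C$ to lie in the right residue class modulo the relevant common difference, and nothing in your argument arranges this.

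Second, the base case of your induction is false. The triangle-free $3$-critical graphs are exactly the odd cycles $C_5,C_7,\dots$, so $\mathcal S_3=\{5,7,9,\dots\}$, and this set is \emph{not} positive homogeneous: for any $d$, $\{d,2d,3d,\dots\}$ contains even numbers, which $\mathcal S_3$ does not. So the assertion that $\mathcal S_m$ is positive homogeneous for all $m\ge 3$ already fails at $m=3$, and the induction cannot start there. These two gaps are genuine and would need to be repaired before the reduction-plus-induction you propose could be called a proof; in particular, neither is covered by Observations \ref{+*hp} or \ref{caphp}, which are the only tools the paper's (one-line, uninformative) proof invokes.
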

\begin{proof}
This follows by induction from Observations \ref{Utfs}, \ref{+*hp}, and \ref{caphp}.
\end{proof}
\noindent This completes the proof that the $G_k$ are dense with $(\frac{1}{4}-o(1))n^2$ edges for $k\geq 6$.  For $k=4$ our construction is the same as Toft's graph and so has $\frac 1 {16} n^2+n$ edges.  The remaining case is $k=5$.

\subsection{Density of $G_5$}
\label{g5}
In this section we show the bound $c_5\geq \frac{4}{31}$.

In the construction of $G_5$, $C_1$ is a $U(4)$, and $C_2$ is a $U(4,3)$.  Say $C_1$ consists of a copy $S_1^1$ of the Toft graph  matched with an independent set of the same size, and $C_2$ is likewise constructed from the $4$- and $3$-critical graphs $S^2_1,S^2_2$, respectively.  Denoting by $v(H)$ and $e(H)$ the number of vertices and edges, respectively, in a graph $H$, we have:
\[
e(G_5)>e(S_1^1)+v(S_1^1)\cdot \abs{A(C_2)}.
\]
(Of course, here $\abs{A(C_2)}=v(S^2_1)\cdot v(S^2_2)$.)  Since $S_1^1$ is the Toft graph, we have
\begin{equation}
  e(G_5)>\frac 1 {16} v(S_1^1)^2+v(S_1^1)\cdot \abs{A(C_2)}.
  \label{g5e}
\end{equation}
For vertices instead of edges, we have
\begin{equation*}
  v(G_5)=2 v(S_1^1)+v(S^2_1)\cdot v(S^2_2)+v(S^2_1)+v(S^2_2).
\label{g5v}
\end{equation*}
When we take $v(S^2_1)$, $v(S^2_2)$ to be large, the last two summands  are negligible:
\begin{equation}
  v(G_5)<(1+\ep)\left(2 v(S_1^1)+\abs{A(C_2)}\right).
  \label{ag5v}
\end{equation}
We want to maximize the ratio $\frac{e(G_5)}{v(G_5)^2}$.  It is an easy optimization problem to check that our best choice is to make $\abs{A(C_2)}$ larger than $v(S_1)$ by a factor of $\frac {15}{8}$ (note that we can do this by Lemma \ref{homap} and Observation \ref{caphp}).  For this choice, and denoting now $v(S^1_1)$ by simply $s^1_1$, lines (\ref{g5e}) and  (\ref{ag5v}) give:
\begin{equation}
  \frac{e(G_5)}{v(G_5)^2}>\frac 1 {1+\ep}
\frac
{
  \frac 1 {16} (s^1_1)^2+\frac {15}{8}(s_1^1)^2
}
{
  (\frac{31}{8})^2(s^1_1)^2
}
=\left(\frac{1}{1+\ep}\right) \frac 4 {31},
\end{equation}
and thus, taking the supremum (as $n$ grows large), we do have that $c_5\geq \frac 4 {31}$.

\section{Avoiding more short odd cycles}
\label{nopents}

By using an iterated generalized Mycielski operation together with the constructive method used to prove Theorem \ref{dtf} in Section \ref{construct}, we will demonstrate the existence of families of dense $k$-critical graphs for $k\geq 4$ of any fixed odd-girth $\ell+2$.  In particular, we show the best-possible bounds $c_{\ell,k}\geq \frac 1 4$ for all $k\geq 6$.

We first define the generalization of the Mycielskian doubling operation: to construct $\mu_q(M)$, add for each vertex $v=v_0$ in the graph $M$ vertices $v_1,v_2,\dots,v_q$ and join $v_i$ to all vertices $w_{i-1}$ (and $w_{i+1}$) for which $v,w$ are adjacent in $M$. We call the vertices $\{v_q\st v\in M\}$ the \emph{forward} vertices of $\mu_q(M)$.  We call all edges of the original graph $M$ Type 1 edges, edges between between the sets $\{v_{q-1}\}$ and $\{v_q\}$ Type 3 edges; and all other edges Type 2 edges.    We define $\bar \mu_q(M)$ to be $\mu_q(M)$ plus a single vertex $z$ joined to all the forward vertices $\{v_q\st v\in M\}$.  $\bar \mu_q(M)$ is sometimes called a \emph{cone} of the graph $M$ (for example, in \cite{cones}).  Note that $\bar \mu_1(M)$ is simply the Mycielski operation applied to $M$.

Letting $\bar \mu_q^0(M):=M$, we now recursively set $\bar \mu_q^k(M)=\bar \mu_q\left(\bar \mu_q^{k-1}(M)\right)$ and $\mu_q^k(M)=\mu_q\left(\bar \mu_q^{k-1}(M)\right)$; the two sides of this last equation have the same sets of forward vertices, Type 1 edges, Type 2 edges, and Type 3 edges.  Note that these sets are not recursively inherited---for example, \emph{all} edges of $\mu_q^{k-1}(M)$ are Type 1 edges of $\mu_q^k(M)$.  
The following theorem of Stiebitz (whose original appearance in \cite{stieb} may be difficult to locate but which is referenced with proof in \cite{gcones}) will extend our construction of dense triangle-free critical graphs to a nonconstructive proof for the existence of dense critical graphs avoiding all odd cycles beneath any arbitrary threshold:
\begin{theorem}[Stiebitz\cite{stieb}]
\label{t.stieb}  For any odd cycle $C$, we have that $\bar \mu_q^k(C)$ has chromatic number $k+3$.\qed
\end{theorem}
\noindent (The original statement allows the value of $q$ applied in each of the $k$ iterations to vary, but this is unnecessary for us and just serves to complicate notation.) Noting that $\chi(\mu^q(M))=\chi(M)$ and thus also that $\chi(\bar \mu^q(M))\leq \chi(M)+1$, the theorem is equivalent to the following formulation:
\begin{cor}
  For any odd cycle $C$ and any $k+2$ coloring of $\mu_q^k(C)$, all $k+2$ colors appear at the forward vertices.\qed
\label{c.cones}
\end{cor}

With respect to odd cycles, the important aspect of the iterated generalized Mycielskian is the following:

\begin{lemma}
 If $M$ has odd girth $>\ell$ and $q\geq \frac{\ell-1} 2$, then $\bar \mu_q(M)$ has odd girth $>\ell$.  Thus also $\bar\mu_q^k(M)$ has odd girth $>\ell$ for any $k$.
\label{Mogirth}
\end{lemma}
\begin{proof}
  Every vertex in $\mu_q(M)$ is either a vertex $v\in M$, or a corresponding $v_j$ ($1\leq j\leq q$).  If we project all of the vertices $v_j$ onto their corresponding vertices $v$, this induces a map from the edges of $\mu_q(M)$ to the edges of $M$.  This map sends walks to walks and closed walks to closed walks.  Thus if $\mu_q(M)$ contains some odd cycle of length $\leq \ell$, $M$ contains an odd closed walk of length $\leq \ell$, and thus an odd cycle of length $\leq \ell$.  Thus an odd cycle of length $\leq \ell$ in $\bar \mu_q(M)$ must include the extra vertex $z$.

On the other hand if we delete all Type 1 edges from $\bar \mu_q(M)$, the result can be 2-colored, coloring $v_j$ with the parity of $j$ for $0\leq j\leq q$ and $z$ with the parity of $q-1$.  Thus any odd cycle in $\bar \mu_q(M)$ includes an edge of $M$.  Any cycle including an edge of $M$ and the vertex $z$ has length at least $2q+3$.
\end{proof}

For any odd $\ell$ and odd cycle $C_s$ ($s> \ell$), 
Corollary \ref{c.cones} gives that whenever 
$\mu_{q}^{k-2}(C_s)$ is $k$-colored, all $k$ colors must appear at its forward vertices.  Thus for $r\leq k$, and letting $q:=\frac{\ell-3}{2}$, we can now construct a graph $M_{k,r}^{\ell,s}$ from $\mu_{q}^{k-2}(C_s)$ by removing Type 3 edges one by one, discarding any forward vertices isolated by this process, until we cannot do so without allowing the result to be $k$-colored so that only $r-1$ colors appear at forward vertices.  (This is the step where our argument is nonconstructive.)  The result satisfies:
\begin{observ}  $M_{k,r}^{\ell,s}$ is a (connected) graph such that
\label{Ms}
\begin{enumerate}
 \item $M_{k,r}^{\ell,s}$ can be $k$-colored so that just $r$ colors appear at forward vertices.  Moreover, one of those $r$ colors can be required to appear (among forward vertices) at only a single vertex of our choosing.
 \item In any $k$-coloring of $M_{k,r}^{\ell,s}$, at least $r$ colors appear at forward vertices,\label{p.atleastr}
 \item If any edge is removed from $M_{k,r}^{\ell,s}$, the result can be $k$-colored so that at most $r-1$ colors appear at forward vertices.\label{p.atmostr}\qed
\end{enumerate}
\end{observ}
\begin{proof}
  Parts \ref{p.atleastr} and \ref{p.atmostr} are immediate from the definition.  For Part 1, note that by definition, if we remove any Type 3 edge $\{v_q,u_{q-1}\}$ (where $v,u\in \mu^{k-3}_q(C_s)$), the result can be $k$-colored so that only $r-1$ colors appear at forward vertices.  Thus, it suffices to show that we can replace the edge and give a $k$-coloring which is identical at the forward vertices except for using a new color at $v_q$.  This follows from the fact that there is always a vertex (nonadjacent to $v_q$) whose neighborhood is a superset of the neighborhood of $v_q$, whose color can thus be used at $v_q$: if $q=1$ then $v$ is such a vertex; otherwise, $v_{q-2}$ is such a vertex.  Finally, note that the vertex $v_q$ can be chosen arbitrarily among the forward vertices.
\end{proof}

In spite of the fact that these are essentially the same properties enumerated for the sets $U^{k-1}_{k-i+1}$ in Lemma \ref{setcolor} (with the terminology \emph{active vertices} replaced with \emph{forward vertices}), the graphs $M_{k,r}^{\ell,s}$ will not take their place in our constructions in this section.  Instead, we will use the $M_{k,r}^{\ell,s}$ (actually, their forward vertices) in place of the graphs $S_{k-r}$ in the construction of the analogue to the $U^{k-1}_{k-i+1}$ used here.  Let  us now make this precise.

Fixing the parameter $k$, we construct graphs $W^\ell(r_1,r_2,\dots,r_t)$ as follows: take a disjoint union of graphs $M_{k-1,r_i}^{\ell,s}$, $1\leq i\leq t$, together with an `active set' $A=\prod F(M_{k-1,r_i}^{\ell,s})$ (here $F(M)$ denotes the forward vertices of $M$); and join, for each $i$, each vertex $\bar v$ in $F(M_{k-1,r_i}^{\ell,s})$ to all the vertices $u\in A$ which equal $\bar v$ in their $i$th coordinate.  Similar to our convention in Section \ref{construct}, we set

\[_\ell W^{k-1}_{k-i+1}\defn W^\ell(k-1,k-2,\dots,k-i+1).\]
The next lemma is the analogue to Lemma \ref{setcolor} for the sets $_\ell W^{k-1}_{k-i+1}$.
\begin{lemma} The sets $_\ell W^{k-1}_{k-i+1}= W^\ell(k-1,k-2,\dots,k-i+1)$ satisfy:
\label{Wsetcolor}
\begin{enumerate}
\item  \label{Wscolor} $_\ell W^{k-1}_{k-i+1}$ can be properly $(k-1)$-colored so that  $\leq i$ different colors appear at active vertices and so that among active vertices, only one vertex of our choosing gets the $i$th color.
\item  In any $(k-1)$-coloring of  $_\ell W^{k-1}_{k-i+1}$, at least $i$ different colors occur as the colors of active vertices.
\label{Wascolor}
\item If any edge from $_\ell W_{k-i+1}^{k-1}$ is removed, it can be properly $(k-1)$-colored so that at most $i-1$ colors occur at active vertices.
\label{Wascritical}
\end{enumerate}

\end{lemma}
\begin{proof}
  The proof of Lemma \ref{Wsetcolor} is essentially identical to that of Lemma \ref{setcolor}.  It should satisfy the reader to check that the properties of the forward sets of the $M_{k-1,r}$ under $(k-1)$-colorings enumerated in Observation \ref{Ms} are shared by the $r$-critical graphs $S_{k-r}$, and are, moreover, precisely those properties of the $S_{k-r}$ used in the proof of Lemma \ref{setcolor}.
\end{proof}

Similar to our construction in Section \ref{gk}, we take $G^\ell_k$ to be the union of a $C_1= {_\ell W^{k-1}}_{\cel{\frac k 2} +1}$ with a $C_2= {_\ell W^{k-1}}_{\flr{\frac k 2}+1}$; the active sets are joined in a complete bipartite graph.  As in that section, we have (now as consequences of \ref{Wsetcolor}) that $G^\ell_k$ is critically $k$-colorable.  

We now check that $G^\ell_k$ contains no odd cycles of length $\leq \ell$.  As constructed, $G^\ell_k$ was built in part out of many graphs $M_{k-1,r}^{\ell,s}$, and edges of such graphs are of Type 1, 2, or 3; note that the Type 1 edges of an $M_{k,r}^{\ell,s}$ are the edges of a $\mu_{q}^{k-4}(C_s)$.

If we were to delete from $G^\ell_k$ all Type 1 edges of the graphs $M_{k-1,r}^{\ell,s}$ appearing in $G^\ell_k$, we are left with a bipartite graph.   Thus any odd cycle in $G^\ell_k$ must contain a vertex $v$ from one of the graphs $\mu_{q}^{k-4}(C_s)$---without loss of generality we let the vertex $v$ be from $C_1$.  By Lemma \ref{Mogirth}, an odd cycle of length $<\ell$ in $G^\ell_k$ cannot lie entirely in one of the graphs $\mu_{q}^{k-4}(C_s)$, thus it must include at least some active vertex in $C_1$.  Can it contain just one active vertex?  No: if we remove all active vertices but one from $C_1$, the result is some copies of some $M_{k-1,r}^{\ell,s}$'s (where $r$ takes some values $\leq k-1$) which will be disconnected by the removal of the remaining active vertex.  Thus the result contains no cycles passing through the active vertex, and we see that any cycle in $C_1$ passing through one active vertex must pass through at least two.  Of course, since removing the active vertices of $C_1$ from $G^\ell_k$ separates what is left of $C_1$ from $C_2$, it is true more generally that any cycle in $G^\ell_k$ passing through an active vertex of $C_1$ must in fact pass through two such vertices.  The distance between two such vertices is 2, and the distance from each to $v$ is at least $\frac{\ell-3}{2}$, so the length of any odd cycle in $G^\ell_{k}$ is at least $\ell$.

Finally, we remark on the density of the graphs $G^\ell_{k}$.  From a density standpoint, it seems perhaps that there is a problem with our use of the graphs $M_{k-1,r}^{\ell,s}$ in the construction of the graphs $W_{k-i+1}^{k-1}$; we have not given any argument that restricts how small the forward-sets will be of the $M_{k-1,r}^{\ell,s}$ relative to all of the $M_{k-1,r}^{\ell,s}$, so how will we argue that the active sets of the $W^{k-1}_{k-i+1}$ ($i\geq 3$) can take up a $(1-\ep)$ fraction of those graphs by increasing the parameter $s$?  The construction of each $W^{k-1}_{k-i+1}$ involves the use of a $M_{k-1}^{k-1}$, and this graph does not require a deletion argument---we know that half of $M_{k-1}^{k-1}$ consists of forward vertices.   It is not hard to see then, that it is sufficient for our purposes to know that the forward sets of $M_{k-1,k-2}^{\ell,s}$ become arbitrary large as we increase $s$ (even if $\abs{F(M_{k-1,k-2}^{\ell,s})}$ is small compared with $\abs{M_{k-1,k-2}^{\ell,s}}$).  The following simple observation will suffice for our purposes:

\begin{observ}
Given $M_{k,r}^{\ell,s}\sbs \mu_q^{k-3}(C_s)$,
The subset $V_{q-1}=\{v_{q-1}\in M_{k,r}^{\ell,s}\st v\in C_s\}$ is dominated by the subset $V_q=\{v_q\in M_{k,r}^{\ell,s}\st v\in C_s\}=F(M_{k,r}^{\ell,s}).$  In other words, every vertex in $V_{q-1}$ is adjacent to a vertex in $V_q$.
\label{o.dominating}
\end{observ}
\begin{proof}
  Otherwise, $M_{k,r}^{\ell,s}$ can be $k$-colored so that just \emph{one} color appears at the forward vertices.
\end{proof}
Fix now $k$, $r$, and $\ell$, and consider $F(M_{k,r}^{\ell,s})$, and let $U=\{v_{q-1}\in M_{k,r}^{\ell,s}\st v\in C_s\}$.  Note that $\abs{U}=s$, and every vertex of $U$ has degree $2^{k-2}$; thus Observation \ref{o.dominating} implies that $\abs{F(M_{k,r}^{\ell,s})}\geq {s/2^{k-2}}$.  Thus we can, as desired, construct graphs $M_{k-1,k-2}^{\ell,s}$ whose forward sets are arbitrarily large, and thus can construct graphs $W^{k-1}_{k-i+1}$ in which the active set occupies a $(1-\ep)$ fraction of the vertex set.

\bigskip

It is easy to check that our graphs show $c_{\ell,4}\geq \frac 1{(\ell+1)^2}$ (and in this case, the argument is actually constructive as there are no deletions).  With the remarks from the preceding paragraphs out of the way, similar methods as those in Section \ref{density} give that  $c_{\ell,k}=\frac 1 4$ for all $k\geq 6$.  The bound $c_{\ell,5}\geq \frac {1}{2(\ell+1)}$ follows from making each active set equal in size.  Like the case $\ell=3$, the case $\ell=5$ is special in the sense that a better constant $c_{5,5}\geq \frac{3}{35}$ can be shown.  This is achieved by constructing the set $_5W^4_4=W^5(4)$ not with the set $M_{4,4}^{5,s}$ constructed by applying a deletion argument to sparse graph $\mu_1^2(C_s)$, but with a set constructed by applying the deletion argument to the dense graph $\mu_1^1(G_4^5)$, and then carrying out an optimization similar to that in Section \ref{g5} (the best choice is to let the larger of the two active sets be bigger by a factor of $\frac {17}{6}$).

\section{Further Questions}
\label{Qs}

There are many natural questions that follow from what we have presented here.  Probably the most immediate is whether our constants $c_{\ell,k}$ are optimal for the cases $k=4,5$. (Particularly for the case $k=5$, $\ell\geq 7$, improvements may not be out of reach.)  Our current knowledge on the constants $c_{\ell,k}$, coming from the lower bounds proved in this paper and the trivial $\frac 1 4$ upper bound, are summarized in Table \ref{ctable}. (The bound $c_4\geq \frac 1 {16}$ comes from Toft's graph and so is not new).

\renewcommand{\arraystretch}{1.5}

\setlength{\tabcolsep}{4pt}

\begin{table}
\large
\begin{center}
\begin{tabular}{c|cccccc}
\backslashbox[0pt]{$\ell$}{$k$}  & 4      & 5    & 6 & 7 & 8 & $\cdots$ \\
\hline

   3 & {\small $\geq$} $\frac{1}{16}$ & {\small $\geq$} $\frac{4}{31}$    &  $\frac 1 4$  &  $\frac 1 4$ & $\frac 1 4$ & $\cdots$ \\

   5 & {\small $\geq$} $\frac{1}{36}$ & {\small $\geq$} $\frac{3}{35}$    &  $\frac{1}{4}$  &  $\frac 1 4$ &$\frac 1 4$ & $\cdots$ \\

   7 & {\small $\geq$} $\frac{1}{64}$ &  {\small $\geq$} $\frac 1 {16}$  & $\frac 1 4$ &  $\frac 1 4$ &  $\frac 1 4$ &  $\cdots$ \\

   9  & {\small $\geq$} $\frac{1}{100}$ & {\small $\geq$} $\frac 1 {20}$   &  $\frac 1 4$  &$\frac 1 4$  &  $\frac 1 4$ &  $\cdots$ \\
   11  & {\small $\geq$} $\frac{1}{144}$ & {\small $\geq$} $\frac 1 {24}$   &  $\frac 1 4$  &$\frac 1 4$  &  $\frac 1 4$ &  $\cdots$ \\

  $\vdots$ &   $\vdots$  &      $\vdots$  &    $\vdots$    &   $\vdots$ &  $\vdots$ \\
\end{tabular}
\end{center}
\caption{Bounds and exact values for the constants $c_{\ell,k}$.\label{ctable} }
\end{table}

As discussed earlier, the families of triangle-free and odd-girth-$\ell$ graphs seem to be rather unusual in that we have succeeded at determining the maximum asymptotic edge-density in $k$-critical members.  For these families, however, this determination did not require any sophisticated upper-bound result, since it turns out that the maximum edge density is asymptotically unchanged by the $k$-criticality restriction.  In fact, it seems that there is a dearth of upper-bound results about critical graphs---perhaps all known upper bounds come from trivial observations like the fact that a $k$-critical connected graph cannot contain $K_k$ as proper subgraph, which allows an application of Tur\'an's theorem.  (For some recent results and background in the area of the density of general critical graphs, see for example \cite{dc}.)

Because of the added restriction, triangle-free graphs may be a good place to attempt upper bound results on critical graphs.  For example, can we show some nontrivial upper bound on the constant $c_4$?  With its strong requirements on the odd-girth, the following question may be a good testing ground for attempts at upper bounds.

\begin{question}
   What is the behavior of the constants $c_{\ell,4}$ as $\ell$ grows large?  Is it true that $c_{\ell,4}\to 0$ as $\ell\to \infty$?  
\end{question}

One can also return to the question of the size of the minimum degree in highly-chromatic triangle-free graphs.  Since the Erd\H{o}s-Simonovits-Hajnal construction \cite{mindeg} is far from being critical, we ask:
\begin{question}
How large a minimum degree forces the chromatic number of triangle-free (resp.~odd-girth $>\ell$) critical graphs to be bounded?
\end{question}

\noindent Unlike the case where we do not require criticality, we have no linear lower bound.  Thomassen has shown that we cannot have a linear lower bound on the minimum degree if we avoid higher order odd-cycles, even without requiring criticality\cite{pmd}.

\bibliographystyle{abbrv}

\begin{thebibliography}{10}

\bibitem{b}
S.~Brandt.
\newblock On the structure of dense triangle-free graphs.
\newblock {\em Combinatorica}, 20:237--245, 1999.

\bibitem{bt}
S.~Brandt and S.~Thomass{\'e}.
\newblock Dense triangle-free graphs are 4-colorable: A solution to the
  {E}rd{\H o}s-{S}imonovits problem.
\newblock \url{http://www.lirmm.fr/~thomasse/liste/vega11.pdf}.

\bibitem{elZ}
M.~El-Zahar and N.~Sauer.
\newblock The chromatic number of the product of two 4-chromatic graphs is 4.
\newblock {\em Combinatorica}, 5:121--126, 1985.

\bibitem{ekr}
P.~Erd{\H o}s, D.~J. Kleitman, and B.~L. Rothschild.
\newblock Asymptotic enumeration of ${K}\sb{n}$-free graphs.
\newblock In {\em Colloquio Internazionale sulle Teorie Combinatorie (Rome,
  1973), Tomo II}, pages 19--27. Atti dei Convegni Lincei, No. 17. Accad. Naz.
  Lincei, Rome, 1976.

\bibitem{mindeg}
P.~Erd{\H o}s and M.~Simonovits.
\newblock On a valence problem in extremal graph theory.
\newblock {\em Discrete Mathematics}, 5:323--334, 1973.

\bibitem{es}
P.~Erd{\H o}s and A.~H. Stone.
\newblock On the structure of linear graphs.
\newblock {\em Bulletin of the American Mathematical Society}, 52:1087--1091,
  1946.

\bibitem{g}
A.~Gy{\'a}rf{\'a}s.
\newblock Personal communication, 2005.

\bibitem{gcones}
A.~Gy{\'a}rf{\'a}s, T.~Jensen, and M.~Stiebitz.
\newblock On Graphs With Strongly Independent Color-Classes.
\newblock {\em Journal of Graph Theory}, 46:1--14, 2004.

\bibitem{h}
R.~H{\"a}ggkvist.
\newblock Odd cycles of specified length in nonbipartite graphs.
\newblock In {\em Graph Theory}, pages 89--99. Annals of Discrete Mathematics,
  Vol.~13. North-Holland, Amsterdam-New York, 1982.

\bibitem{dc}
T.~R. Jensen.
\newblock Dense critical and vertex-critical graphs.
\newblock {\em Discrete Mathematics}, 258:63--84, 2002.

\bibitem{jin}
G.~Jin.
\newblock Triangle-free four-chromatic graphs.
\newblock {\em Discrete Mathematics}, 145:151--170, 1995.

\bibitem{ks}
A.~V. Kostochka and M.~Stiebnitz.
\newblock On the number of edges in colour-critical graphs and hypergraphs.
\newblock {\em Combinatorica}, 20:521--530, 2000.

\bibitem{k}
I.~K{\v r}{\' i}{\v z}.
\newblock A hypergraph-free construction of highly chromatic graphs without
  short cycles.
\newblock {\em Combinatorica}, 9:227--229, 1989.

\bibitem{lhyper}
L.~Lov{\' a}sz.
\newblock On chromatic number of graphs and set-systems.
\newblock {\em Acta Math. Hungar.}, 19:59--67, 1968.

\bibitem{borsuk}
L.~Lov{\' a}sz.
\newblock Self-dual polytopes and the chromatic number of distance graphs on
  the sphere.
\newblock {\em Acta Scientiarum Mathematicarum}, 45:317--323, 1983.

\bibitem{mycielski}
J.~Mycielski.
\newblock Sur le coloriage des graphes.
\newblock {\em Colloq. Math.}, 3:161--162, 1955.

\bibitem{stableKn}
A.~Schrijver.
\newblock Vertex-critical subgraphs of kneser graphs.
\newblock {\em Nieuw Arch. Wiskd. III. Ser.}, 26:454--461, 1978.

\bibitem{stieb}
M.~Stiebitz.
\newblock {\em Beitr{\" a}ge zur Theorie der f{\" a}rbungskritschen Graphen}.
\newblock PhD thesis, Technical University Ilmenaur, 1985.

\bibitem{cones}
C.~Tardif.
\newblock Fractional chromatic numbers of cones over graphs.
\newblock {\em Journal of Graph Theory}, 38:87--94, 2001.

\bibitem{t}
C.~Thomassen.
\newblock On the chromatic number of triangle-free graphs of large minimum
  degree.
\newblock {\em Combinatorica}, 22:591--596, 2002.

\bibitem{pmd}
C.~Thomassen.
\newblock On the chromatic number of pentagon-free graphs of large minimum
  degree.
\newblock {\em Combinatorica}, 27:241--243, 2007.

\bibitem{toft}
B.~Toft.
\newblock On the maximal number of edges of critical $k$-chromatic graphs.
\newblock {\em Studia Sci.~Math.~Hungar.}, 5:461--470, 1970.

\bibitem{zykov}
A.~Zykov.
\newblock On some properties of linear complexes (in Russian).
\newblock {\em Matem. Sbornik}, 24:163--187, 1949.

\end{thebibliography}

\end{document}